\newtheorem{theorem}{Theorem}
\newtheorem{corollary}[theorem]{Corollary}
\newtheorem{remark}[theorem]{Remark}
\newenvironment{proof}[1][Proof]{\noindent\textbf{#1.} }{\ \rule{0.5em}{0.5em}}
\begin{document}

\title{Tornheim-like series, harmonic numbers and zeta values}
\author{Ilham A. Aliev \\
%EndAName
Department of Mathematics,\\
Akdeniz University, 07058 Antalya Turkey\\
ialiev@akdeniz.edu.tr \and Ayhan Dil* \\
%EndAName
Department of Mathematics,\\
Akdeniz University, 07058 Antalya Turkey\\
adil@akdeniz.edu.tr*}
\maketitle

\begin{abstract}
Explicit evaluations of the Tornheim-like double series in the form%
\[
\sum_{n,m=1}^{\infty }\frac{H_{n+m+s}}{nm\left( n+m+s\right) },\text{ }s\in 
\mathbb{N\cup }\left\{ 0\right\} 
\]%
and their extensions are given. Furthermore, series of the type 
\[
\sum_{m=1}^{\infty }\frac{2H_{2m+1}-H_{m}}{2m\left( 2m+1\right) } 
\]%
and some other Tornheim-like multiple series\ are evaluated in terms of the
zeta values.

\textbf{2010 Mathematics Subject Classification: }40A25\textbf{, }40B05,
11M06\textbf{.}

\textbf{Key words: }Tornheim series, harmonic numbers, Riemann Zeta values.
\end{abstract}

\newpage 

\section{Introduction}

Riemann zeta function is defined by%
\[
\zeta \left( s\right) =\sum_{k=1}^{\infty }\frac{1}{k^{s}}, 
\]%
where $s=\sigma +it$ and $\sigma >1$. For even positive integers, one has
the well-known relationship between zeta values and Bernoulli numbers:%
\begin{equation}
\zeta \left( 2n\right) =\left( -1\right) ^{n+1}\frac{\left( 2\pi \right)
^{2n}}{2\left( 2n\right) !}B_{2n}.  \label{zb}
\end{equation}%
Here $B_{0}=1,$ $B_{1}=-\frac{1}{2},$ $B_{2}=\frac{1}{6},$ $B_{4}=-\frac{1}{%
30},$ $B_{6}=\frac{1}{42},...$ and $B_{2n+1}=0$ for $n\geq 1$ (this result
first published by Euler in 1740).

For odd positive integers, no such simple expression as (\ref{zb}) is known.
Roger Ap\'{e}ry \cite{AR} proved the irrationality of $\zeta \left( 3\right) 
$ and after that $\zeta \left( 3\right) $ was named as Ap\'{e}ry's constant.
Rivoal \cite{TR}\ has shown that infinitely many of the numbers $\zeta
\left( 2n+1\right) $ must be irrational. Besides Zudilin \cite{WZ} has shown
that at least one of the numbers $\zeta \left( 5\right) $, $\zeta \left(
7\right) $, $\zeta \left( 9\right) $ and $\zeta \left( 11\right) $ is
irrational.

The $n$th harmonic number $H_{n}$ is the $n$th partial sum of the harmonic
series:%
\[
H_{n}:=\sum_{k=1}^{n}\frac{1}{k}. 
\]%
For a positive integer $n$ and an integer $m$ the $n$th generalized harmonic
number of order $m$ is defined by%
\[
H_{n}^{\left( m\right) }:=\sum_{k=1}^{n}\frac{1}{k^{m}}, 
\]%
which is the $n$-th partial sum of the Riemann zeta function $\zeta \left(
m\right) $.

Tornheim double series \cite{Tor} (or so called Witten's zeta function \cite%
{DZ}) is defined by%
\begin{equation}
S\left( a,b,c\right) :=\sum_{m,n=1}^{\infty }\frac{1}{m^{a}n^{b}\left(
m+n\right) ^{c}}.  \label{TDS}
\end{equation}%
This series has attracted considerable attention in recent years and been
proved to be powerful tool to find numerous interesting relations between
various zeta values (\cite{BAB, BA, B, BB, BD, E1, E2, K, XZ}). Boyadzhiev 
\cite{KB1, KB2} described a simple method to evaluate multiple series of the
form (\ref{TDS}) in terms of zeta values.

It is well known that there exist deep connections between Tornheim type
series, harmonic numbers and zeta values. As a simple and nice example the
following equation can be given (see \cite{B, BB, E1, E2, N}):

\begin{equation}
\sum_{n,m=1}^{\infty }\frac{1}{nm\left( n+m\right) }=\sum_{m=1}^{\infty }%
\frac{H_{m}}{m^{2}}=2\zeta \left( 3\right) .  \label{hz}
\end{equation}

Kuba \cite{K} considered the following general sum:%
\[
V=\sum_{j,k=1}^{\infty }\frac{H_{j+k}^{\left( u\right) }}{j^{r}k^{s}\left(
j+k\right) ^{t}}. 
\]%
This sum includes the Tornheim's double series (\ref{TDS})\ as special case,
Kuba \cite{K} proved that whenever $w=r+s+t+u$ is even, for $r,s,t,w\in 
\mathbb{N}$ , the series $V$ can be explicitly evaluated in terms of zeta
functions.

On the other hand, Xu and Li \cite{XZ} used the Tornheim type series
computations for evaluation of non-linear Euler sums. Among other results
they obtained%
\begin{equation}
\sum_{m=1}^{\infty }\frac{H_{m+k}}{m\left( m+k\right) }=\frac{%
H_{k}^{2}+H_{k}^{\left( 2\right) }}{k},k\in \mathbb{N}=\left\{ 1,2,3,\ldots
\right\} .  \label{XL}
\end{equation}%
From (\ref{hz}) and (\ref{XL}) it is easy to see that the value of series%
\[
a\left( k\right) =\sum_{m=1}^{\infty }\frac{H_{m+k}}{m\left( m+k\right) }%
,k\in \mathbb{N\cup }\left\{ 0\right\} 
\]%
is irrational for $k=0$ and rational for every $k\in \mathbb{N}$. Hence the
following questions arise naturally: for the integer $s\in \mathbb{N\cup }%
\left\{ 0\right\} $, the values of the double series%
\[
\sum_{n,m=1}^{\infty }\frac{H_{n+m+s}}{nm\left( n+m+s\right) }, 
\]%
and more generally, multiple series%
\[
A_{n}\left( s\right) =\sum_{k_{1}=1}^{\infty }\ldots
\sum_{k_{n-1}=1}^{\infty }\frac{H_{k_{1}+\cdots +k_{n-1}+s}}{k_{1}\cdots
k_{n-1}\left( k_{1}+\cdots +k_{n-1}+s\right) } 
\]%
are rational or irrational? In this work we answer these questions.
Moreover, we give explicit evaluation formulas for some Tornheim-like series
via zeta values.

\section{Formulations and proofs of the main results}

\begin{theorem}
\label{it}Consider the double series%
\[
A\left( s\right) =\sum_{n,m=1}^{\infty }\frac{H_{n+m+s}}{nm\left(
n+m+s\right) },\text{ \ \ }s\in \mathbb{N\cup }\left\{ 0\right\} . 
\]%
For any $s\in \mathbb{N}$ the value of $A\left( s\right) $ is rational but $%
A\left( 0\right) $ is irrational. More precisely,%
\[
A\left( s\right) =\left\{ 
\begin{array}{cc}
6\zeta \left( 4\right) & \text{if }s=0 \\ 
6\sum_{j=0}^{s-1}\left( -1\right) ^{j}\binom{s-1}{j}\frac{1}{\left(
j+1\right) ^{4}} & \text{if }s\geq 1%
\end{array}%
\right. . 
\]
\end{theorem}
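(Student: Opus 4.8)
The plan is to collapse the double series into a single logarithmic integral and then read off both cases from one formula. The starting point is the classical integral representation
\[
\frac{H_N}{N} = -\int_0^1 x^{N-1}\ln(1-x)\,dx,
\]
valid for every positive integer $N$, which follows by expanding $\ln(1-x) = -\sum_{k\geq 1} x^k/k$ and integrating termwise (the resulting series $\sum_{k\geq 1} 1/(k(N+k))$ telescopes to $H_N/N$). Applying this with $N = n+m+s$ rewrites each summand of $A(s)$ as $-\frac{1}{nm}\int_0^1 x^{n+m+s-1}\ln(1-x)\,dx$, and since every term of the original series is positive, Tonelli's theorem lets me interchange the double summation with the integral without any convergence worries.

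After the interchange I would pull out $x^{s-1}$ and recognize the two inner sums as $\sum_{n\geq 1} x^n/n = -\ln(1-x)$ and likewise in $m$. This produces the compact representation
\[
A(s) = -\int_0^1 x^{s-1}\ln^3(1-x)\,dx = \int_0^1 (1-u)^{s-1}(-\ln u)^3\,du,
\]
where the second equality is the substitution $u = 1-x$. This single integral is the hinge of the whole argument; note that it already converges when $s=0$, because near $u=1$ the factor $(1-u)^{-1}$ is tamed by $(-\ln u)^3 \sim (1-u)^3$.

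Finally I would evaluate using the elementary moment $\int_0^1 u^{j}(-\ln u)^3\,du = 6/(j+1)^4$. For $s\geq 1$, expanding $(1-u)^{s-1}$ by the binomial theorem and integrating term by term gives $A(s) = 6\sum_{j=0}^{s-1}(-1)^j\binom{s-1}{j}(j+1)^{-4}$, a manifestly rational number. For $s=0$ the factor becomes $(1-u)^{-1} = \sum_{k\geq 0} u^k$, and the same moment yields $A(0) = 6\sum_{k\geq 0} (k+1)^{-4} = 6\zeta(4)$, which is irrational since $\zeta(4)=\pi^4/90$ and $\pi$ is transcendental. I expect the only genuinely delicate point to be justifying the term-by-term manipulations — the interchange of the double sum with the integral and the expansion of the geometric/binomial series inside it — but both are handled by positivity via Tonelli, so no hard estimates are needed. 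As a cross-check, one can instead sum over $n$ first using (\ref{XL}) to obtain $A(s) = \sum_{m\geq 1} (H_{m+s}^2 + H_{m+s}^{(2)})/(m(m+s))$ and confirm the $s=0$ value against the known Euler sums $\sum H_m^2/m^2 = \tfrac{17}{4}\zeta(4)$ and $\sum H_m^{(2)}/m^2 = \tfrac{7}{4}\zeta(4)$.
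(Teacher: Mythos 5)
Your proposal is correct and takes essentially the same approach as the paper: both reduce $A(s)$ to the key integral $-\int_0^1 x^{s-1}\ln^3(1-x)\,dx$ and then evaluate it identically via the moments $\int_0^1 u^j(-\ln u)^3\,du = 6/(j+1)^4$, using the geometric expansion of $(1-u)^{-1}$ when $s=0$ and the binomial expansion of $(1-u)^{s-1}$ when $s\ge 1$. The only difference is in the bookkeeping on the way to that integral: you invoke the representation $H_N/N=-\int_0^1 x^{N-1}\ln(1-x)\,dx$ and justify the interchange by Tonelli, whereas the paper first telescopes $H_{n+m+s}$ into the triple Tornheim-type series $\sum_{n,m,k\ge 1}1/\bigl(nmk(n+m+k+s)\bigr)$ and then resums three geometric series under a fourfold integral --- the same mechanism in different packaging (and, as a minor aside, your value $\zeta(4)=\pi^4/90$ is right, so $A(0)=\pi^4/15$, not the $\pi^4/16$ misprinted in the paper's proof).
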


\begin{proof}
By telescoping series formula we have%
\begin{eqnarray*}
1+\frac{1}{2}+\frac{1}{3}+\cdots +\frac{1}{n+m+s} &=&\sum_{k=1}^{\infty
}\left( \frac{1}{k}-\frac{1}{k+n+m+s}\right) \\
&=&\left( n+m+s\right) \sum_{k=1}^{\infty }\frac{1}{k\left( k+n+m+s\right) }.
\end{eqnarray*}%
It then follows that%
\begin{eqnarray*}
A\left( s\right) &=&\sum_{n,m=1}^{\infty }\frac{1}{nm\left( n+m+s\right) }%
\left( 1+\frac{1}{2}+\frac{1}{3}+\cdots +\frac{1}{n+m+s}\right) \\
&=&\sum_{n,m,k=1}^{\infty }\frac{1}{nmk\left( n+m+k+s\right) } \\
&=&\sum_{n,m,k=1}^{\infty }\left( \int_{0}^{1}x^{n-1}dx\right) \left(
\int_{0}^{1}y^{m-1}dy\right) \left( \int_{0}^{1}z^{k-1}dz\right) \left(
\int_{0}^{1}t^{n+m+k+s-1}dt\right) \\
&=&\int_{0}^{1}t^{s+2}\left[ \int_{0}^{1}\left( \sum_{n=1}^{\infty }\left(
xt\right) ^{n-1}\right) dx\int_{0}^{1}\left( \sum_{m=1}^{\infty }\left(
yt\right) ^{m-1}\right) dy\int_{0}^{1}\left( \sum_{k=1}^{\infty }\left(
zt\right) ^{k-1}\right) dz\right] dt \\
&=&\int_{0}^{1}t^{s+2}\left[ \int_{0}^{1}\frac{1}{1-xt}dx\int_{0}^{1}\frac{1%
}{1-yt}dy\int_{0}^{1}\frac{1}{1-zt}dz\right] dt.
\end{eqnarray*}%
Since%
\[
\int_{0}^{1}\frac{1}{1-ut}du=-\frac{1}{t}\ln \left( 1-t\right) , 
\]%
we have%
\begin{equation}
A\left( s\right) =-\int_{0}^{1}t^{s-1}\ln ^{3}\left( 1-t\right)
dt=-\int_{0}^{1}\left( 1-t\right) ^{s-1}\ln ^{3}tdt.  \label{u1}
\end{equation}%
Setting $s=0$, it follows that%
\begin{eqnarray*}
A\left( 0\right) &=&-\int_{0}^{1}\frac{1}{1-t}\ln
^{3}tdt=-\sum_{j=0}^{\infty }\int_{0}^{1}t^{j}\ln ^{3}tdt \\
&=&-\sum_{j=0}^{\infty }\left( -\frac{6}{\left( j+1\right) ^{4}}\right)
=6\zeta \left( 4\right) =\frac{\pi ^{4}}{16}.
\end{eqnarray*}%
On the other hand, if $s\geq 1$, then utilizing the formulas%
\[
\left( 1-t\right) ^{s-1}=\sum_{j=0}^{s-1}\left( -1\right) ^{j}\binom{s-1}{j}%
t^{j} 
\]%
and%
\[
\int_{0}^{1}t^{j}\ln ^{3}tdt=-\frac{3!}{\left( j+1\right) ^{4}}, 
\]%
(\ref{u1}) can computed explicitly as%
\begin{eqnarray*}
A\left( s\right) &=&-\int_{0}^{1}\left( 1-t\right) ^{s-1}\ln ^{3}tdt \\
&=&3!\sum_{j=0}^{s-1}\left( -1\right) ^{j}\binom{s-1}{j}\frac{1}{\left(
j+1\right) ^{4}}.
\end{eqnarray*}%
This proves the stated result.
\end{proof}

In the same way as in Theorem \ref{it}, by making use of the formulas,%
\[
\left( 1-t\right) ^{s-1}=\sum_{j=0}^{s-1}\left( -1\right) ^{j}\binom{s-1}{j}%
t^{j}\text{ and }\int_{0}^{1}t^{j}\ln ^{k}tdt=\left( -1\right) ^{k}\frac{k!}{%
\left( j+1\right) ^{k+1}}, 
\]%
one can prove the following more general result.

\begin{theorem}
\label{G}Denote%
\[
A_{n}\left( s\right) =\sum_{k_{1}=1}^{\infty }\ldots
\sum_{k_{n-1}=1}^{\infty }\frac{H_{k_{1}+\cdots +k_{n-1}+s}}{k_{1}\cdots
k_{n-1}\left( k_{1}+\cdots +k_{n-1}+s\right) },\text{ \ \ }s\in \mathbb{%
N\cup }\left\{ 0\right\} . 
\]%
Then%
\begin{equation}
A_{n}\left( s\right) =\left\{ 
\begin{array}{cc}
n!\zeta \left( n+1\right) & \text{if }s=0, \\ 
n!\sum_{j=0}^{s-1}\left( -1\right) ^{j}\binom{s-1}{j}\frac{1}{\left(
j+1\right) ^{n+1}} & \text{if }s\geq 1.%
\end{array}%
\right.  \label{g}
\end{equation}
\end{theorem}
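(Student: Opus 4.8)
The plan is to run the argument of Theorem~\ref{it} verbatim, replacing the exponent $3$ by $n$ at every stage. First I would apply the same telescoping identity to the harmonic number in the numerator: writing $N=k_1+\cdots+k_{n-1}+s$, one has $H_N = N\sum_{k_n=1}^{\infty}\frac{1}{k_n(k_n+N)}$, so the factor $N$ cancels the denominator and a fresh summation index $k_n$ appears. This converts $A_n(s)$ into the symmetric $n$-fold sum
\[
A_n(s)=\sum_{k_1,\ldots,k_n=1}^{\infty}\frac{1}{k_1\cdots k_n\left(k_1+\cdots+k_n+s\right)}.
\]

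Next I would represent each reciprocal by an integral, using $\frac{1}{k_i}=\int_0^1 x_i^{k_i-1}\,dx_i$ for $i=1,\ldots,n$ together with $\frac{1}{k_1+\cdots+k_n+s}=\int_0^1 t^{k_1+\cdots+k_n+s-1}\,dt$. After interchanging the summations with the integrals and collecting the common factor $t^{n+s-1}\prod_{i}(x_it)^{k_i-1}$, summing the $n$ geometric series produces $\prod_{i=1}^{n}\frac{1}{1-x_it}$. Integrating out each $x_i$ by means of $\int_0^1\frac{du}{1-ut}=-\frac1t\ln(1-t)$ contributes the factor $(-1)^n t^{-n}\ln^n(1-t)$, whose $t^{-n}$ cancels precisely against $t^{n+s-1}$, leaving the single integral
\[
A_n(s)=(-1)^n\int_0^1 t^{s-1}\ln^n(1-t)\,dt=(-1)^n\int_0^1 (1-t)^{s-1}\ln^n t\,dt,
\]
where the last equality comes from the substitution $t\mapsto 1-t$.

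Finally I would evaluate this integral in the two cases using the stated formula $\int_0^1 t^j\ln^n t\,dt=(-1)^n\frac{n!}{(j+1)^{n+1}}$. For $s=0$ I expand $\frac{1}{1-t}=\sum_{j\ge 0}t^j$ and integrate termwise; the two factors of $(-1)^n$ cancel and the series collapses to $n!\sum_{j\ge 0}(j+1)^{-(n+1)}=n!\,\zeta(n+1)$. For $s\ge 1$ I expand $(1-t)^{s-1}=\sum_{j=0}^{s-1}(-1)^j\binom{s-1}{j}t^j$ and integrate term by term, obtaining the finite rational combination $n!\sum_{j=0}^{s-1}(-1)^j\binom{s-1}{j}(j+1)^{-(n+1)}$.

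The geometric summations and the closed form of $\int_0^1 t^j\ln^n t\,dt$ are routine, so there is no genuine obstacle here; the only point I would be careful to state explicitly is the justification for interchanging the $n$ summations with the multiple integral. Since every term in the expanded sum is nonnegative, Tonelli's theorem licenses all of these interchanges simultaneously, and no delicate convergence estimate is needed.
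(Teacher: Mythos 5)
Your proposal is correct and is exactly the argument the paper intends: the paper proves Theorem \ref{G} by remarking that the proof of Theorem \ref{it} carries over verbatim with the exponent $3$ replaced by $n$, using the binomial expansion of $(1-t)^{s-1}$ and the formula $\int_{0}^{1}t^{j}\ln ^{k}t\,dt=(-1)^{k}\frac{k!}{(j+1)^{k+1}}$, which is precisely your route (telescoping to get the symmetric $n$-fold sum, integral representations, geometric series, and the reduction to $(-1)^{n}\int_{0}^{1}(1-t)^{s-1}\ln ^{n}t\,dt$). Your explicit appeal to Tonelli's theorem for the interchange of sums and integrals is a justification the paper leaves implicit, but it changes nothing in the structure of the argument.
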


Two special cases of the theorem are as follows:%
\[
A_{2}\left( s\right) =\sum_{k=1}^{\infty }\frac{H_{k+s}}{k\left( k+s\right) }%
=\left\{ 
\begin{array}{cc}
2!\zeta \left( 3\right) & \text{if }s=0, \\ 
2!\sum_{j=0}^{s-1}\left( -1\right) ^{j}\binom{s-1}{j}\frac{1}{\left(
j+1\right) ^{3}} & \text{if }s\geq 1,%
\end{array}%
\right. 
\]%
and%
\[
A_{4}\left( s\right) =\sum_{k,m,n=1}^{\infty }\frac{H_{k+m+n+s}}{kmn\left(
k+m+n+s\right) }=\left\{ 
\begin{array}{cc}
4!\zeta \left( 5\right) & \text{if }s=0, \\ 
4!\sum_{j=0}^{s-1}\left( -1\right) ^{j}\binom{s-1}{j}\frac{1}{\left(
j+1\right) ^{5}} & \text{if }s\geq 1.%
\end{array}%
\right. 
\]

\begin{remark}
It can be easily seen from (\ref{g}) that the expression $A_{n}\left(
s\right) $ is a rational number for all $s\geq 1$, but $A_{2}\left( 0\right)
=2!\zeta \left( 3\right) $ is irrational. If $n\geq 4$ and even, it is not
known whether the numbers $A_{n}\left( 0\right) =n!\zeta \left( n+1\right) $
are irrational or not. On the other hand, for any odd $n\in \mathbb{N}$ we
have $A_{n}\left( 0\right) =n!\zeta \left( n+1\right) =r_{n}\pi ^{n+1}$ (see
(\ref{zb})) is also irrational because of $r_{n}$ is rational and $\pi
^{n+1} $ is irrational. Notice that the irrationality of $\pi ^{n}$ is a
consequence of the transcendentality of $\pi $. Indeed, if $\pi ^{n}$ is
rational, say $\pi ^{n}=\frac{p}{q}$ where $p$ and $q$ are integers, then $%
\pi $ is a solution of the equation $qx^{n}-p=0$ and therefore $\pi $ must
be an algebraic number, which is false. More generally, if $\alpha $\ is a
transcendental number and $r=\frac{p}{q}$ is a rational number, then $\alpha
^{r}$ becomes irrational number.
\end{remark}

\begin{corollary}
For any $k\in \mathbb{N}$ we have%
\[
\sum_{j=0}^{k-1}\left( -1\right) ^{j}\binom{k-1}{j}\frac{1}{\left(
j+1\right) ^{3}}=\frac{H_{k}^{2}+H_{k}^{\left( 2\right) }}{2k}. 
\]
\end{corollary}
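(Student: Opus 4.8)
The plan is to observe that the series underlying this corollary, namely
\[
\sum_{m=1}^{\infty }\frac{H_{m+k}}{m\left( m+k\right) },
\]
has already been evaluated in two different ways in the preceding material, so the identity will follow immediately by equating those two evaluations. On the one hand, the special case $A_{2}\left( s\right) $ of Theorem \ref{G} gives, for $s\geq 1$,
\[
\sum_{k=1}^{\infty }\frac{H_{k+s}}{k\left( k+s\right) }=2!\sum_{j=0}^{s-1}\left( -1\right) ^{j}\binom{s-1}{j}\frac{1}{\left( j+1\right) ^{3}}.
\]
On the other hand, the result (\ref{XL}) of Xu and Li evaluates the identical series as $\left( H_{k}^{2}+H_{k}^{\left( 2\right) }\right) /k$.

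First I would align the notation of the two formulas: the dummy summation index and the shift parameter are interchanged between Theorem \ref{G} (summation index $k$, shift $s$) and (\ref{XL}) (summation index $m$, shift $k$), so I would rewrite both with a single common shift variable, say $k$. Equating the two right-hand sides then yields
\[
2!\sum_{j=0}^{k-1}\left( -1\right) ^{j}\binom{k-1}{j}\frac{1}{\left( j+1\right) ^{3}}=\frac{H_{k}^{2}+H_{k}^{\left( 2\right) }}{k}.
\]
Since $2!=2$, dividing through by $2$ produces exactly the asserted identity, valid for every $k\in \mathbb{N}$.

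The main (and essentially only) obstacle here is bookkeeping rather than analysis: one must take care to match the swapped roles of summation index and shift between the two source formulas, and to confirm that both evaluations hold precisely on the common range $k\in \mathbb{N}$, so that the equated identity is legitimate for all $k\geq 1$. No further computation is required, since both ingredients have already been established earlier in the paper.
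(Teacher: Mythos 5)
Your proposal is correct and is exactly the paper's argument: the paper likewise obtains the identity by specializing Theorem \ref{G} to $n=2$ and equating with the Xu--Li evaluation (\ref{XL}), then dividing by $2!$. Your careful alignment of the interchanged roles of the summation index and shift parameter is the only bookkeeping needed, and you handled it properly.
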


\begin{proof}
By applying Theorem \ref{G} in the case of $n=2$ and considering (\ref{XL})
we arrive at the stated result.
\end{proof}

The next theorem gives a new relationship between harmonic numbers and $%
\zeta \left( 2\right) $.

\begin{theorem}
Let%
\[
H_{m}=\sum_{k=1}^{m}\frac{1}{k}\text{ and }O_{m}=\sum_{k=1}^{m}\frac{1}{2k-1}%
. 
\]%
Then the formulas%
\begin{equation}
\sum_{m=1}^{\infty }\frac{2H_{2m+1}-H_{m}}{2m\left( 2m+1\right) }=2\left(
2-\ln 2\right) -\zeta \left( 2\right)  \label{ln}
\end{equation}%
and%
\begin{equation}
\sum_{m=1}^{\infty }\frac{O_{m}}{2m\left( 2m+1\right) }=\frac{1}{4}\zeta
\left( 2\right)  \label{on}
\end{equation}%
are valid.
\end{theorem}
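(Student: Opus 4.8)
The plan is to prove the second identity (\ref{on}) first and then reduce (\ref{ln}) to it by an elementary relation between the two kinds of harmonic numbers. The bridge comes from splitting $H_{2m}$ into its even- and odd-indexed parts, which gives $H_{2m}=\frac{1}{2}H_{m}+O_{m}$, and hence
\[
2H_{2m+1}-H_{m}=2O_{m}+\frac{2}{2m+1}.
\]
Dividing by $2m(2m+1)$ and summing, the left-hand side of (\ref{ln}) becomes $2\sum_{m\geq 1}\frac{O_{m}}{2m(2m+1)}+\sum_{m\geq 1}\frac{1}{m(2m+1)^{2}}$, that is, twice the sum in (\ref{on}) plus one explicitly summable rational series. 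So once (\ref{on}) is established, (\ref{ln}) follows from a routine partial-fraction computation.

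For (\ref{on}) I would imitate the integral method of Theorem \ref{it}. Writing $\frac{1}{2m(2m+1)}=\int_{0}^{1}t^{2m-1}(1-t)\,dt$ and interchanging sum and integral gives
\[
\sum_{m=1}^{\infty }\frac{O_{m}}{2m(2m+1)}=\int_{0}^{1}\frac{1-t}{t}\Big(\sum_{m=1}^{\infty }O_{m}t^{2m}\Big)\,dt .
\]
The generating function follows from $\sum_{m\geq 1}O_{m}x^{m}=\frac{1}{1-x}\sum_{k\geq 1}\frac{x^{k}}{2k-1}$ with $x=t^{2}$, which yields $\sum_{m\geq 1}O_{m}t^{2m}=\frac{t}{2(1-t^{2})}\ln\frac{1+t}{1-t}$. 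Substituting and using $\frac{1-t}{1-t^{2}}=\frac{1}{1+t}$ collapses everything to $\frac{1}{2}\int_{0}^{1}\frac{1}{1+t}\ln\frac{1+t}{1-t}\,dt$. The substitution $u=\frac{1-t}{1+t}$ turns this into $-\frac{1}{2}\int_{0}^{1}\frac{\ln u}{1+u}\,du$, and expanding $\frac{1}{1+u}$ as a geometric series gives $\int_{0}^{1}\frac{\ln u}{1+u}\,du=-\sum_{k\geq 1}\frac{(-1)^{k-1}}{k^{2}}=-\frac{1}{2}\zeta(2)$. Hence the sum equals $\frac{1}{4}\zeta(2)$, which is (\ref{on}).

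To finish (\ref{ln}) I would evaluate the leftover series by partial fractions, $\frac{1}{m(2m+1)^{2}}=\frac{1}{m}-\frac{2}{2m+1}-\frac{2}{(2m+1)^{2}}$. The last piece sums to $2\big(\frac{\pi^{2}}{8}-1\big)$, while the first two combine cleanly through $\frac{1}{m}-\frac{2}{2m+1}=2\int_{0}^{1}x^{2m-1}(1-x)\,dx$, so that $\sum_{m\geq 1}\big(\frac{1}{m}-\frac{2}{2m+1}\big)=2\int_{0}^{1}\frac{x}{1+x}\,dx=2(1-\ln 2)$. Collecting terms, the leftover series equals $4-2\ln 2-\frac{\pi^{2}}{4}$, and adding twice the value from (\ref{on}) gives $\frac{\pi^{2}}{12}+4-2\ln 2-\frac{\pi^{2}}{4}=2(2-\ln 2)-\zeta(2)$, as claimed.

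The only genuinely nonelementary step is the integral $\int_{0}^{1}\frac{\ln u}{1+u}\,du=-\frac{1}{2}\zeta(2)$ (equivalently the value $\mathrm{Li}_{2}(1/2)$); everything else is telescoping, generating functions, and partial fractions. The main point to be careful about is the justification of interchanging summation and integration, which is legitimate here because the integrands are of constant sign (or the series converge absolutely after the geometric-series expansion), so monotone or dominated convergence applies.
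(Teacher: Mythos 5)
Your proof is correct, and it takes a genuinely different route from the paper's. The paper proves (\ref{ln}) and (\ref{on}) together by introducing the Tornheim-like double series $A=\sum_{m,n=0}^{\infty }\frac{1}{(2m+1)(2n+1)(2m+2n+1)}$ and its tail $B$ (the same sum over $m,n\geq 1$): telescoping in $n$ expresses $B$ once through $\sum_{m}O_{m}/(2m(2m+1))$ and once through $\sum_{m}(H_{2m+1}-1-\frac{1}{2}H_{m})/(2m(2m+1))$, and the key analytic input is the double-integral evaluation $A=\frac{1}{4}\int_{0}^{1}t^{-2}\ln ^{2}\frac{1+t}{1-t}\,dt=\zeta (2)$, i.e. (\ref{4}). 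You avoid double series entirely: you prove (\ref{on}) directly from the generating function $\sum_{m\geq 1}O_{m}t^{2m}=\frac{t}{2(1-t^{2})}\ln \frac{1+t}{1-t}$, reducing it to the single integral $\int_{0}^{1}\frac{\ln u}{1+u}\,du=-\frac{1}{2}\zeta (2)$, and then deduce (\ref{ln}) from (\ref{on}) via the elementary splitting $H_{2m}=\frac{1}{2}H_{m}+O_{m}$, which gives $2H_{2m+1}-H_{m}=2O_{m}+\frac{2}{2m+1}$, plus partial fractions; I checked the decomposition $\frac{1}{m(2m+1)^{2}}=\frac{1}{m}-\frac{2}{2m+1}-\frac{2}{(2m+1)^{2}}$ and your auxiliary values $2(1-\ln 2)$ and $2(\pi ^{2}/8-1)$, and the final arithmetic indeed yields $2(2-\ln 2)-\zeta (2)$. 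As for what each approach buys: your bridge identity makes the dependence of (\ref{ln}) on (\ref{on}) completely transparent and keeps everything at the level of single sums, which is arguably more elementary; the paper's detour through $A$ and $B$ fits its Tornheim theme and produces the evaluation $A=\zeta (2)$, which is reused (as (\ref{7})) in the proof of the following theorem, so within the paper's architecture it does double duty. Your remarks on interchanging sum and integral are adequate: positivity (Tonelli) covers the $O_{m}$ step, and $\sum_{k}\int_{0}^{1}u^{k}\left\vert \ln u\right\vert du=\sum_{k}(k+1)^{-2}<\infty $ justifies the alternating geometric expansion.
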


\begin{proof}
Denote%
\[
A=\sum_{m,n=0}^{\infty }\frac{1}{\left( 2m+1\right) \left( 2n+1\right)
\left( 2m+2n+1\right) } 
\]%
and%
\[
B=\sum_{m,n=1}^{\infty }\frac{1}{\left( 2m+1\right) \left( 2n+1\right)
\left( 2m+2n+1\right) }. 
\]%
From the equation%
\[
\sum_{k=0}^{\infty }\frac{1}{\left( 2k+1\right) ^{2}}=\frac{\pi ^{2}}{8}, 
\]%
we have%
\begin{equation}
A=\frac{\pi ^{2}}{4}-1+B.  \label{2}
\end{equation}%
Further, by the telescoping series formula, we find%
\begin{eqnarray*}
B &=&\sum_{m=1}^{\infty }\frac{1}{2m+1}\frac{1}{2m}\sum_{n=1}^{\infty
}\left( \frac{1}{2n+1}-\frac{1}{2n+1+2m}\right) \\
&=&\sum_{m=1}^{\infty }\frac{1}{2m\left( 2m+1\right) }\left( \frac{1}{3}+%
\frac{1}{5}+\cdots +\frac{1}{2m+1}\right) \\
&=&\sum_{m=1}^{\infty }\frac{O_{m}-\frac{2m}{2m+1}}{2m\left( 2m+1\right) } \\
&=&\sum_{m=1}^{\infty }\frac{O_{m}}{2m\left( 2m+1\right) }%
-\sum_{m=1}^{\infty }\frac{1}{\left( 2m+1\right) ^{2}} \\
&=&\sum_{m=1}^{\infty }\frac{O_{m}}{2m\left( 2m+1\right) }-\frac{\pi ^{2}}{8}%
+1
\end{eqnarray*}%
Hence we obtain that%
\begin{equation}
B=\sum_{m=1}^{\infty }\frac{O_{m}}{2m\left( 2m+1\right) }+1-\frac{3}{4}\zeta
\left( 2\right) .  \label{s1}
\end{equation}%
On the other hand, it is clear that the expression $B$ can also be written as%
\begin{equation}
B=\sum_{m=1}^{\infty }\frac{H_{2m+1}-1-\frac{1}{2}H_{m}}{\left( 2m+1\right)
2m}.  \label{3}
\end{equation}%
Now let us evaluate $A$.%
\begin{eqnarray*}
A &=&\sum_{m,n=0}^{\infty }\left( \int_{0}^{1}x^{2m}dx\right) \left(
\int_{0}^{1}y^{2n}dy\right) \left( \int_{0}^{1}t^{2m+2n}dt\right) \\
&=&\int_{0}^{1}\left( \int_{0}^{1}\sum_{m=0}^{\infty }\left( xt\right)
^{2m}dx\int_{0}^{1}\sum_{n=0}^{\infty }\left( yt\right) ^{2n}dy\right) dt \\
&=&\int_{0}^{1}\left( \int_{0}^{1}\frac{1}{1-\left( xt\right) ^{2}}%
dx\int_{0}^{1}\frac{1}{1-\left( yt\right) ^{2}}dy\right) dt \\
&=&\frac{1}{4}\int_{0}^{1}\frac{1}{t^{2}}\ln ^{2}\left( \frac{1+t}{1-t}%
\right) dt.
\end{eqnarray*}%
The substitution $\frac{1+t}{1-t}=u$ immediately leads to the following
equality:%
\[
A=\frac{1}{2}\int_{1}^{\infty }\frac{1}{\left( 1-u\right) ^{2}}\ln ^{2}udu. 
\]%
Integration by parts gives%
\begin{eqnarray}
A &=&\int_{1}^{\infty }\frac{1}{u\left( u-1\right) }\ln udu=\int_{1}^{\infty
}\frac{1}{1-\frac{1}{u}}\frac{\ln u}{u^{2}}du  \nonumber \\
&=&\sum_{k=0}^{\infty }\int_{1}^{\infty }u^{-k-2}\ln udu=\sum_{k=0}^{\infty }%
\frac{1}{\left( k+1\right) ^{2}}=\zeta \left( 2\right) =\frac{\pi ^{2}}{6}.
\label{4}
\end{eqnarray}%
We use (\ref{2}), (\ref{3}) and (\ref{4}) to conclude that%
\[
\sum_{m=1}^{\infty }\frac{H_{2m+1}-1-\frac{1}{2}H_{m}}{2m\left( 2m+1\right) }%
=1-\frac{\pi ^{2}}{4}+\frac{\pi ^{2}}{6}=1-\frac{\pi ^{2}}{12}, 
\]%
from which we obtain%
\[
\sum_{m=1}^{\infty }\frac{H_{2m+1}-\frac{1}{2}H_{m}}{2m\left( 2m+1\right) }%
=1-\frac{\pi ^{2}}{12}+\sum_{m=1}^{\infty }\frac{1}{2m\left( 2m+1\right) }. 
\]%
From the formulas%
\[
\frac{1}{1.2}+\frac{1}{2.3}+\frac{1}{3.4}+\cdots =1 
\]%
and%
\[
\frac{1}{1.2}+\frac{1}{3.4}+\frac{1}{5.6}+\cdots =\ln 2, 
\]%
it follows that%
\[
\sum_{m=1}^{\infty }\frac{1}{2m\left( 2m+1\right) }=1-\ln 2 
\]%
and therefore%
\begin{eqnarray*}
\sum_{m=1}^{\infty }\frac{H_{2m+1}-\frac{1}{2}H_{m}}{2m\left( 2m+1\right) }
&=&2\left( 1-\frac{\pi ^{2}}{12}+1-\ln 2\right) \\
&=&2\left( 2-\ln 2\right) -\zeta \left( 2\right) .
\end{eqnarray*}%
Similarly, from (\ref{2}), (\ref{s1}) and (\ref{4}) we have%
\[
\frac{\pi ^{2}}{6}=\frac{\pi ^{2}}{4}-1+\sum_{m=1}^{\infty }\frac{O_{m}}{%
2m\left( 2m+1\right) }-\frac{3}{4}\zeta \left( 2\right) +1 
\]%
and as a result%
\[
\sum_{m=1}^{\infty }\frac{O_{m}}{2m\left( 2m+1\right) }=\frac{1}{4}\zeta
\left( 2\right) . 
\]
\end{proof}

In the following theorem we give some interesting relationships between the
Tornheim-like series and the zeta values $\zeta \left( 2\right) $ and $\zeta
\left( 3\right) $.

\begin{theorem}
We have the following series evaluations:%
\[
\text{(}a\text{)\textbf{\ \ \ \ \ \ \ \ }}\sum_{m,n=0}^{\infty }\frac{1}{%
\left( m+\frac{1}{2}\right) \left( n+\frac{1}{2}\right) \left( m+n+\frac{1}{2%
}\right) \left( m+n+1\right) }=16\zeta \left( 2\right) -14\zeta \left(
3\right) . 
\]%
\[
\text{(}b\text{)\textbf{\ \ \ \ \ \ \ \ \ }}\sum_{m,n=0}^{\infty }\frac{1}{%
\left( m+\frac{1}{2}\right) \left( n+\frac{1}{2}\right) \left( m+n+1\right)
\left( m+n+\frac{3}{2}\right) }=14\zeta \left( 3\right) -8\zeta \left(
2\right) . 
\]%
\[
\text{(}c\text{)}\sum_{m,n=0}^{\infty }\frac{1}{\left( m+\frac{1}{2}\right)
\left( n+\frac{1}{2}\right) \left( m+n+\frac{1}{2}\right) \left(
m+n+1\right) \left( m+n+\frac{3}{2}\right) }=24\zeta \left( 2\right)
-28\zeta \left( 3\right) . 
\]
\end{theorem}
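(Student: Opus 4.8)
The plan is to treat (a), (b), (c) uniformly by reducing each, through partial fractions in the variable $N=m+n$, to the single auxiliary family
\[
T(c)=\sum_{m,n=0}^{\infty}\frac{1}{(2m+1)(2n+1)(2m+2n+c)},\qquad c\in\{1,2,3\}.
\]
Because $\frac{1}{m+\frac12}=\frac{2}{2m+1}$, each product $\frac{1}{(m+\frac12)(n+\frac12)}$ equals $\frac{4}{(2m+1)(2n+1)}$, so the three relevant building blocks are $\sum\frac{1}{(m+\frac12)(n+\frac12)(2N+1)}=4T(1)$, $\sum\frac{1}{(m+\frac12)(n+\frac12)(N+1)}=8T(2)$ (using $\frac{1}{N+1}=\frac{2}{2N+2}$), and $\sum\frac{1}{(m+\frac12)(n+\frac12)(2N+3)}=4T(3)$. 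The trailing chains of factors are then split by the elementary decompositions $\frac{1}{(N+\frac12)(N+1)}=\frac{4}{2N+1}-\frac{2}{N+1}$, $\frac{1}{(N+1)(N+\frac32)}=\frac{2}{N+1}-\frac{4}{2N+3}$, and $\frac{1}{(N+\frac12)(N+1)(N+\frac32)}=\frac{4}{2N+1}-\frac{4}{N+1}+\frac{4}{2N+3}$.

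To evaluate $T(c)$ I would reuse the integral device from the previous theorems: writing $\frac{1}{2m+1}=\int_0^1 x^{2m}dx$ and $\frac{1}{2m+2n+c}=\int_0^1 t^{2m+2n+c-1}dt$, summing the two geometric series and using $\int_0^1\frac{dx}{1-(xt)^2}=\frac{1}{2t}\ln\frac{1+t}{1-t}$, I obtain the single formula
\[
T(c)=\frac14\int_0^1 t^{c-3}\ln^2\frac{1+t}{1-t}\,dt.
\]
For $c=1$ this integral is precisely the quantity $A$ computed in the previous theorem (equation (\ref{4})), so $T(1)=\zeta(2)$ is available for free.

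The main obstacle is the evaluation of the two new integrals, $c=2$ and $c=3$. For $T(2)=\frac14\int_0^1 t^{-1}\ln^2\frac{1+t}{1-t}\,dt$ the substitution $t=\tanh\theta$ (under which $\ln\frac{1+t}{1-t}=2\theta$) converts the integral to $\int_0^\infty\frac{\phi^2}{\sinh\phi}\,d\phi$; expanding $\frac{1}{\sinh\phi}=2\sum_{k\ge0}e^{-(2k+1)\phi}$ and integrating termwise gives $4\sum_{k\ge0}(2k+1)^{-3}=4\cdot\frac78\zeta(3)=\frac72\zeta(3)$, hence $T(2)=\frac78\zeta(3)$. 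I should stress that a direct termwise expansion of the original integral is circular, since it only reproduces $T(2)$; the hyperbolic substitution is what does the real work. For $T(3)=\frac14\int_0^1\ln^2\frac{1+t}{1-t}\,dt$ I would set $u=\frac{1+t}{1-t}$ to reach $\frac12\int_1^\infty\frac{\ln^2 u}{(1+u)^2}\,du$ and then apply the reflection $u\mapsto 1/u$ to fold this onto $\frac12\int_0^1\frac{\ln^2 v}{(1+v)^2}\,dv$; expanding $(1+v)^{-2}=\sum_{j\ge0}(-1)^j(j+1)v^j$ and using $\int_0^1 v^j\ln^2 v\,dv=\frac{2}{(j+1)^3}$ shows the last integral equals $\sum_{j\ge0}\frac{2(-1)^j}{(j+1)^2}=\frac{\pi^2}{6}=\zeta(2)$, so $T(3)=\frac12\zeta(2)$.

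Finally I would assemble the three values. The partial-fraction decompositions and the building blocks above give
\[
(a)=16T(1)-16T(2),\quad (b)=16T(2)-16T(3),\quad (c)=16T(1)-32T(2)+16T(3),
\]
and substituting $T(1)=\zeta(2)$, $T(2)=\frac78\zeta(3)$, $T(3)=\frac12\zeta(2)$ yields $16\zeta(2)-14\zeta(3)$, $14\zeta(3)-8\zeta(2)$, and $24\zeta(2)-28\zeta(3)$, respectively. I expect the only subtle points to be the (routine) justification for interchanging the summations with the integrals, exactly as in the earlier proofs, and the two closed-form integral evaluations above; the remainder is bookkeeping with partial fractions.
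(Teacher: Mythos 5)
Your proposal is correct, and at its core it is the same proof as the paper's: the paper likewise reduces all three sums to the three auxiliary quantities you call $T(1)$, $T(2)$, $T(3)$ (its equations (\ref{4})/(\ref{7}), (\ref{6}) and (\ref{b}), namely $\zeta(2)$, $\tfrac78\zeta(3)$, $\tfrac12\zeta(2)$), obtains them from the identical integral representation $\tfrac14\int_0^1 t^{c-3}\ln^2\bigl(\tfrac{1+t}{1-t}\bigr)dt$, and assembles (a), (b), (c) by exactly the algebra your partial fractions encode (the paper phrases it as differences of sums, e.g.\ $\tfrac{1}{2N+1}-\tfrac{1}{2N+2}=\tfrac{1}{(2N+1)(2N+2)}$, and gets (c) literally as (a) minus (b)). The only genuine divergence is in how the two new integrals are evaluated. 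For $T(2)$ the paper substitutes $u=\tfrac{1+t}{1-t}$, expands $\tfrac{1}{u^2-1}=\sum_{k\ge 0}u^{-2k-2}$ and integrates by parts termwise to reach $\sum_{k\ge 0}(2k+1)^{-3}=\tfrac78\zeta(3)$, whereas you use the hyperbolic substitution $t=\tanh\theta$ and the expansion of $1/\sinh\phi$; for $T(3)$ the paper integrates by parts to get $\int_1^\infty\tfrac{\ln u}{u(u+1)}du$ and expands geometrically in $1/u$, whereas you fold the integral onto $[0,1]$ via $u\mapsto 1/v$ and expand $(1+v)^{-2}$. These devices are interchangeable and land on the same odd and alternating series, so your route is equally valid; the termwise interchanges you flag are justified in both versions by positivity (or absolute convergence), just as in the paper's earlier theorems.
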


\begin{proof}
($a$) We have%
\begin{eqnarray*}
&&\sum_{m,n=0}^{\infty }\frac{1}{\left( 2m+1\right) \left( 2n+1\right)
\left( 2m+2n+2\right) } \\
&=&\sum_{m,n=0}^{\infty }\left( \int_{0}^{1}x^{2m}dx\right) \left(
\int_{0}^{1}y^{2n}dy\right) \left( \int_{0}^{1}t^{2m+2n+1}dt\right) \\
&=&\int_{0}^{1}t\left( \int_{0}^{1}\sum_{m=0}^{\infty }\left( xt\right)
^{2m}dx\int_{0}^{1}\sum_{n=0}^{\infty }\left( yt\right) ^{2n}dy\right) tdt \\
&=&\frac{1}{4}\int_{0}^{1}\frac{1}{t}\ln ^{2}\left( \frac{1+t}{1-t}\right)
dt.
\end{eqnarray*}%
Here the substitution $\frac{1+t}{1-t}=u$ leads to the following equality:%
\begin{eqnarray*}
\frac{1}{4}\int_{0}^{1}\frac{1}{t}\ln ^{2}\left( \frac{1+t}{1-t}\right) dt
&=&\frac{1}{2}\int_{1}^{\infty }\frac{1}{\left( u^{2}-1\right) }\ln ^{2}udu=%
\frac{1}{2}\int_{1}^{\infty }\frac{1}{u^{2}}\frac{1}{\left( 1-\frac{1}{u^{2}}%
\right) }\ln ^{2}udu \\
&=&\frac{1}{2}\sum_{k=0}^{\infty }\int_{1}^{\infty }u^{-2k-2}\ln udu.
\end{eqnarray*}%
After integration by parts we get%
\[
\frac{1}{2}\sum_{k=0}^{\infty }\int_{1}^{\infty }u^{-2k-2}\ln
udu=\sum_{k=0}^{\infty }\frac{1}{\left( 2k+1\right) ^{3}}=\frac{7}{8}\zeta
\left( 3\right) . 
\]%
Hence%
\begin{equation}
\sum_{m,n=0}^{\infty }\frac{1}{\left( 2m+1\right) \left( 2n+1\right) \left(
2m+2n+2\right) }=\frac{7}{8}\zeta \left( 3\right) .  \label{6}
\end{equation}%
On the other hand, according to the formula (\ref{4}),%
\begin{equation}
A=\sum_{m,n=0}^{\infty }\frac{1}{\left( 2m+1\right) \left( 2n+1\right)
\left( 2m+2n+1\right) }=\zeta \left( 2\right) .  \label{7}
\end{equation}%
Now, from (\ref{6}) and (\ref{7}) it follows that%
\begin{eqnarray*}
\zeta \left( 2\right) -\frac{7}{8}\zeta \left( 3\right)
&=&\sum_{m,n=0}^{\infty }\frac{1}{\left( 2m+1\right) \left( 2n+1\right)
\left( 2m+2n+1\right) } \\
&&-\sum_{m,n=0}^{\infty }\frac{1}{\left( 2m+1\right) \left( 2n+1\right)
\left( 2m+2n+2\right) } \\
&=&\sum_{m,n=0}^{\infty }\frac{1}{\left( 2m+1\right) \left( 2n+1\right)
\left( 2m+2n+1\right) \left( 2m+2n+2\right) }
\end{eqnarray*}%
and this proves ($a$).

($b$) By the same method in the proof of ($a$), we have%
\begin{eqnarray*}
&&\sum_{m,n=0}^{\infty }\frac{1}{\left( 2m+1\right) \left( 2n+1\right)
\left( 2m+2n+3\right) } \\
&=&\sum_{m,n=0}^{\infty }\left( \int_{0}^{1}x^{2m}dx\right) \left(
\int_{0}^{1}y^{2n}dy\right) \left( \int_{0}^{1}t^{2m+2n+2}dt\right) \\
&=&\int_{0}^{1}t^{2}\left( \int_{0}^{1}\frac{1}{1-\left( xt\right) ^{2}}%
dx\int_{0}^{1}\frac{1}{1-\left( yt\right) ^{2}}dy\right) dt \\
&=&\frac{1}{4}\int_{0}^{1}\ln ^{2}\left( \frac{1+t}{1-t}\right) dt \\
&=&\frac{1}{2}\int_{1}^{\infty }\frac{1}{\left( u+1\right) ^{2}}\ln ^{2}udu=-%
\frac{1}{2}\int_{1}^{\infty }\ln ^{2}ud\left( \frac{1}{u+1}\right) \\
&=&\int_{1}^{\infty }\frac{1}{u\left( u+1\right) }\ln udu=\sum_{k=2}^{\infty
}\left( -1\right) ^{k}\int_{1}^{\infty }u^{-k}\ln udu \\
&=&\sum_{k=2}^{\infty }\left( -1\right) ^{k}\frac{1}{\left( k-1\right) ^{2}}=%
\frac{\pi ^{2}}{12}.
\end{eqnarray*}%
Thus%
\begin{equation}
\sum_{m,n=0}^{\infty }\frac{1}{\left( 2m+1\right) \left( 2n+1\right) \left(
2m+2n+3\right) }=\frac{\pi ^{2}}{12}=\frac{1}{2}\zeta \left( 2\right) .
\label{b}
\end{equation}%
Now, from (\ref{6}) and (\ref{b}) we have%
\[
\frac{7}{8}\zeta \left( 3\right) -\frac{1}{2}\zeta \left( 2\right)
=\sum_{m,n=0}^{\infty }\frac{1}{\left( 2m+1\right) \left( 2n+1\right) \left(
2m+2n+2\right) \left( 2m+2n+3\right) } 
\]%
and this proves ($b$).

Finally, formula ($c$) can be obtained by substracting the formula ($b$)
from the formula ($a$).
\end{proof}

\begin{remark}
With the method used in this theorem, series of similar types containing
different combinations in the denominator, can be evaluated.
\end{remark}

\end{document}